\newtheorem{theorem}{Theorem}[section]
\newtheorem{definition}[theorem]{Definition}
\newtheorem{lemma}[theorem]{Lemma}
\newtheorem{corollary}[theorem]{Corollary}
\newtheorem{remark}[theorem]{Remark}
\begin{document}

\newcommand\bnode[3][2]{\node (#2) at (0, 0) {}; \fill (0, 0) circle (1pt) node[#1] {#3};}

\title{Spectral radii of sparse non-Hermitian random matrices}
%\date{\today}
\author[Hyungwon Han]{Hyungwon Han}
\address{Department of Mathematical Sciences, KAIST, South Korea}
\email{measure@kaist.ac.kr}

\begin{abstract}
We provide estimates for the spectral radii of an $n\times n$ sparse non-Hermitian random matrix $Z$ with general entries in the regime $p=d/n$ where $0<d<1$ is fixed. Utilizing the structural results of \cite{l90}, we show that the spectral radius $\rho (Z)$ is $0$ with probability converging to some nonzero value, and satisfies the inequality $(\phi (n))^{-1}\leq \rho (Z)\leq \phi (n)$ in the asymptotic sense for any function $\phi$ satisfying $\lim_{n\to\infty}\phi (n)=\infty$ with the remaining probability.
\end{abstract} 

\maketitle

%\tableofcontents

\let\thefootnote\relax
%\footnotetext{MSC2020: Primary 00A05, Secondary 00A66.} %%%%%%%%%%

\bigskip

\section{Introduction}

Spectral radii of non-Hermitian random matrices has been a topic of great interest over the last few decades. However, due to spectral instability, its study is considered challenging compared to the Hermitian counterpart, and the asymptotic behavior for dense non-Hermitian matrices was completely established only recently, evolving through the works of \cite{ge86, bcct, bcg}. For sparse non-Hermitian matrices, even the law of large numbers behavior of the spectral radii is incomplete. While an asymptotic value of $\sqrt{np}$ can be deduced in the regime $p= n^{-1+\varepsilon}$ for $\varepsilon >0$ by modifying the works on dense matrices such as \cite{bcct}, the only result available for $p$ near the critical value $1/n$ is \cite{bbk20}. Although the authors provide precise estimates for the eigenvalues of unweighted Erd\H{o}s-R\'enyi digraphs, much less is obtained for networks with general weights especially when $p=d/n$ as we discuss in Section 3.1.

The analogous problem for the largest eigenvalue of Hermitian random matrices has been settled even in the sparse setting. For the regime $p\gg 1/n$, it was shown that the largest eigenvalue is asymptotically $2\sqrt{np}$, for instance, in the works of \cite{kh01, ty21}. Recently, the behavior in the regime $p=d/n$ for fixed $d>0$ was established for unweighted Erd\"os-R\'enyi graphs in \cite{bbg21} and for weighted networks in \cite{gn22, ghn22}.

Most of the previous results on the spectral radii of non-Hermitian random matrices --- both sparse and dense --- are focused on sharpening the moment estimates, which are inapplicable near the critical value $p=1/n$. In this paper, we use structural results on Erd\H{o}s-R\'enyi digraphs from \cite{l90} to address the regime $p=d/n$ where $0<d<1$ is a fixed constant. Roughly speaking, we show that the spectral radius is $0$ with some nonzero probability, and approximately of order $1$, or to be exact, $\Theta_{\mathsf{p}}(1)$ with the remaining probability, which we define in Definition \ref{def:asymptotic}.

Let $X$, $Y$ be $n\times n$ i.i.d. matrices where $X$ has $\mathrm{Bernoulli}(p)$ entries with $p=p_n$ depending on $n$, and $Y$ has entries $Y_{ij}$ satisfying $Y_{ij}=0$ and $\mathbb{E}|Y_{ij}|^2=1$. Furthermore, we assume that $Y_{ij}$ is not concentrated at $0$, i.e., $\mathbb{P}(Y_{ij}=0)=0$, which is necessary for the establishment of a lower bound in Theorem \ref{constant lower bound}. Define $Z=X\odot Y$ as the entrywise product $Z_{ij}=X_{ij}Y_{ij}$ for $1\leq i,j\leq n$. 
We mainly focus on the constant average degree regime where $p=d/n$ with fixed $0<d<1$. 

The underlying Erd\H{o}s-R\'enyi digraph $\mathcal{G}_{\mathsf{d}}(n,d/n)$ exhibits a phase transition at $d=1$.
If $d<1$, $\mathcal{G}_{\mathsf{d}}(n,d/n)$ has a simple structure where all nontrivial strongly connected components are directed cycles \cite{l90}. We utilize this characterization to show the upper bound result in Theorem \ref{constant theorem} below.
To state our results precisely, we first adapt the following asymptotic notations from \cite{ls09}. We say that an event $\mathcal{A}_n$ depending on $n$ holds \textit{with high probability} if $\mathbb{P}(\mathcal{A}_n)\to 1$ as $n\to\infty$.

\begin{definition} \label{def:asymptotic}
\upshape{
For a sequence of random variables $\lbrace X_n\rbrace$ we write $X_n=O_{\mathsf{p}}(f(n))$ ($X_n=\Omega_{\mathsf{p}}(f(n))$) if for every function $\phi$ satisfying $\lim_{n\to\infty}\phi (n)\to\infty$, the inequality $X_n\leq \phi (n)f(n)$ ($X_n\geq f(n)/\phi (n)$) holds with high probability. We will write $X_n=\Theta_{\mathsf{p}} (f(n))$ if both $X_n=O_{\mathsf{p}}(f(n))$ and $X_n=\Omega_{\mathsf{p}}(f(n))$ hold.
}
\end{definition}

\begin{theorem} \label{constant theorem}
Suppose $p=d/n$ where $0<d<1$ is fixed and $Z$ satisfies the conditions above. Then $\rho (Z) =O_{\mathsf{p}}(1)$.
\end{theorem}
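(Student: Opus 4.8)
The plan is to reduce the spectral radius of $Z$ to a maximum of weighted cycle contributions over the underlying digraph, and then control that maximum by a first-moment estimate. First I would invoke the structural result of \cite{l90}: for fixed $0<d<1$, with high probability every nontrivial strongly connected component of $\mathcal{G}_{\mathsf{d}}(n,d/n)$ is a simple directed cycle. Call this event $\mathcal{E}_n$. On $\mathcal{E}_n$, after permuting indices according to a topological ordering of the condensation (the acyclic quotient graph of strongly connected components), $Z$ becomes block upper-triangular, so $\mathrm{spec}(Z)$ is the union of the spectra of the diagonal blocks. Each diagonal block is either a $1\times 1$ zero block (a trivial component without a self-loop) or the weighted adjacency matrix of a simple directed cycle. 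For a cycle $C$ of length $k$ with edge weights $Y_e$, $e\in C$, this block is a weighted cyclic shift whose characteristic polynomial is $\lambda^{k}-\prod_{e\in C}Y_e$ up to sign, so all of its eigenvalues have modulus $|\prod_{e\in C}Y_e|^{1/k}$ (self-loops, $k=1$, included). Therefore, on $\mathcal{E}_n$,
\[
\rho(Z)=\max_{C}\Big|\prod_{e\in C}Y_e\Big|^{1/|C|},
\]
the maximum taken over all directed cycles $C$ present in $X$.

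Next I would estimate the probability that this maximum exceeds a level $t>\sqrt{d}$ by a union bound over cycles. Since $X$ and $Y$ are independent, the expected number of directed cycles $C$ that are present in $X$ and satisfy $\prod_{e\in C}|Y_e|>t^{|C|}$ equals $\sum_{k\ge 1} N_k\,p^{k}\,\mathbb{P}\big(\prod_{j=1}^{k}|Y_j|>t^{k}\big)$, where $N_k=\tfrac{1}{k}\tfrac{n!}{(n-k)!}\le n^{k}/k$ is the number of directed $k$-cycles on $n$ labeled vertices and $Y_1,\dots,Y_k$ are i.i.d.\ copies of $Y$. Applying Markov's inequality to $\prod_j|Y_j|^{2}$ and using independence together with $\mathbb{E}|Y_{ij}|^{2}=1$ gives $\mathbb{P}\big(\prod_{j=1}^{k}|Y_j|>t^{k}\big)\le t^{-2k}$, so the expected count is at most $\sum_{k\ge 1}\tfrac{1}{k}(d/t^{2})^{k}=-\log(1-d/t^{2})$. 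Hence by Markov's inequality,
\[
\mathbb{P}(\rho(Z)>t)\le -\log\!\big(1-d/t^{2}\big)+\mathbb{P}(\mathcal{E}_n^{c}),
\]
valid for every $t>\sqrt{d}$ and every $n$.

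Finally I would convert this into the statement $\rho(Z)=O_{\mathsf{p}}(1)$. Given any $\phi$ with $\phi(n)\to\infty$, taking $t=\phi(n)$ (which exceeds $\sqrt{d}$ for large $n$) yields $-\log(1-d/\phi(n)^{2})\to 0$, while $\mathbb{P}(\mathcal{E}_n^{c})\to 0$ by \cite{l90}; thus $\mathbb{P}(\rho(Z)>\phi(n))\to 0$, which is precisely the requirement in Definition \ref{def:asymptotic}. In effect the bound shows $\rho(Z)$ is tight uniformly in $n$, and tightness is exactly $O_{\mathsf{p}}(1)$.

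I expect the main obstacle to be the first step: rigorously justifying the identity $\rho(Z)=\max_{C}|\prod_{e\in C}Y_e|^{1/|C|}$ on $\mathcal{E}_n$. This requires carefully translating Łuczak's component description into the block-triangular reduction and verifying that each cyclic block contributes exactly the claimed moduli, while checking that trivial components and self-loops are accounted for correctly. The probabilistic part is then a routine first-moment argument; the only quantitative input beyond the cycle count $N_k p^k\le d^{k}/k$ is the second-moment hypothesis $\mathbb{E}|Y_{ij}|^{2}=1$, which is precisely what makes the Markov step with exponent $2$ converge and forces the bound to vanish as $t\to\infty$.
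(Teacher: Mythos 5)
Your proposal is correct, but it follows a genuinely different route from the paper. The paper never identifies the spectrum: it conditions on several high-probability events (all nontrivial strongly connected components are short cycles, the number of cycles is small, all entries are at most $n^2$, the cycle-edge weights are at most $t(n)$), then counts the paths contributing to the entries of $Z^{k}$ for the enormous power $k=n^{n}$ --- using the fact that a non-cycle edge is traversed at most once and a cycle is visited at most once --- and finally applies Gelfand's formula $\rho(Z)\leq \|Z^{k}\|^{1/k}$. You instead pass to the condensation of the digraph: on the structural event the block upper-triangular form shows that the spectrum of $Z$ is the union of the spectra of the cycle blocks, giving the exact identity $\rho(Z)=\max_{C}\bigl|\prod_{e\in C}Y_e\bigr|^{1/|C|}$, after which a first-moment bound over all $\tfrac{1}{k}\tfrac{n!}{(n-k)!}$ potential $k$-cycles and Markov with exponent $2$ yields $\mathbb{P}(\rho(Z)>t)\leq -\log(1-d/t^{2})+o(1)$. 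This is cleaner and strictly stronger than what the paper proves: you obtain tightness of $\rho(Z)$ uniformly in $n$ rather than just $O_{\mathsf{p}}(1)$, you avoid Gelfand's formula and the crude factorial path counts entirely, and the same identity would immediately deliver the lower bound of Theorem \ref{constant lower bound} as well. The only points to nail down are the ones you flag: that \cite{l90} gives the induced subgraph on each nontrivial component exactly as a chordless directed cycle, and that diagonal entries (self-loops) are handled --- your inclusion of $k=1$ cycles in the sum actually treats this more carefully than the paper, which dismisses self-loops incorrectly as a high-probability non-event. Neither issue is a gap in your argument.
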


We may deduce an analogous lower bound again depending on the cycle structure of $\mathcal{G}_{\mathsf{d}}(n,d/n)$. If the underlying graph is \textit{acyclic}, $Z$ is nilpotent, thus $\rho (Z)=0$. In contrast, the existence of a cycle allows us to place suitable weights on it and thus provide a lower bound for a specific entry of $Z^k$ for large $k$ depending on $n$. Using Theorem \ref{d<1 acyclic}, we have the following result:

\begin{theorem} \label{constant lower bound}
For $p=d/n$ where $0<d<1$ is fixed, 
$\mathcal{G}_{\mathsf{d}}(n,p)$ is acyclic and thus $\rho (Z)=0$ with probability converging to $(1-d)e^{d+d^2/2}$. Conditioned on the event that $\mathcal{G}_{\mathsf{d}}(n,p)$ admits a cycle, with high probability, $\rho (Z)=\Omega_{\mathsf{p}}(1)$.
\end{theorem}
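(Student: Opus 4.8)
\noindent The plan is to handle the two assertions separately, in each case reading off $\rho(Z)$ from the cycle structure of $\mathcal G_{\mathsf d}(n,p)$ supplied by \cite{l90}.

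For the first assertion I would begin with the elementary observation that if $\mathcal G_{\mathsf d}(n,p)$ is acyclic then, after relabelling the vertices in a topological order, $Z$ is strictly upper triangular; hence $Z$ is nilpotent and $\rho(Z)=0$. It then remains only to compute $\lim_{n}\mathbb P\bigl(\mathcal G_{\mathsf d}(n,p)\ \text{acyclic}\bigr)$. For this I would invoke the Poisson approximation for cycle counts: writing $C_k$ for the number of directed cycles of length $k$, a factorial--moment computation $\mathbb E[C_k]=\binom{n}{k}(k-1)!\,p^{k}\to d^{k}/k$, together with joint convergence of $(C_k)_k$ to independent $\mathrm{Pois}(d^{k}/k)$ variables, controls all short cycles. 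Since $d<1$ the means are summable, so long cycles are negligible and, by the structural dichotomy of \cite{l90}, no complicated strongly connected components arise to spoil the approximation. Acyclicity is the event that no $C_k$ over the relevant lengths is positive, so its limiting probability is the product of the Poisson zero-probabilities; summing the resulting series over the cycle lengths that create a nontrivial strongly connected component, as identified in \cite{l90}, gives $\exp\bigl(-\sum_{k\ge 3}d^{k}/k\bigr)=(1-d)\,e^{d+d^{2}/2}$.

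For the second assertion the key input is again \cite{l90}: conditioned on the presence of a cycle, with high probability every nontrivial strongly connected component is a vertex--disjoint simple directed cycle, and---because the Poisson means $d^{k}/k$ are summable---there is such a cycle of length $\ell=O_{\mathsf p}(1)$. Fix one, say $C=(v_1\to\cdots\to v_\ell\to v_1)$. In the strongly connected component decomposition $Z$ is block upper triangular with the principal submatrix $Z_C$ on $V(C)$ as a diagonal block, so $\mathrm{spec}(Z_C)\subseteq\mathrm{spec}(Z)$. As $Z_C$ is the weighted adjacency matrix of a single directed cycle, its characteristic polynomial is $\lambda^{\ell}-\prod_{i=1}^{\ell}Y_{v_iv_{i+1}}$ (indices mod $\ell$), and therefore
\[
\rho(Z)\ \ge\ \rho(Z_C)\ =\ \Bigl|\prod_{i=1}^{\ell}Y_{v_iv_{i+1}}\Bigr|^{1/\ell}.
\]
Finally I would show this geometric mean is $\Omega_{\mathsf p}(1)$: since $X$ and $Y$ are independent, the weights along $C$ are i.i.d.\ copies of $Y_{11}$, and $\mathbb P(Y_{11}=0)=0$ gives $\mathbb P(|Y_{11}|<1/\phi(n))\to0$ for every $\phi(n)\to\infty$; a union bound over the $\ell=O_{\mathsf p}(1)$ edges of $C$ then yields $|Y_{v_iv_{i+1}}|\ge 1/\phi(n)$ simultaneously with high probability, whence $\rho(Z)\ge 1/\phi(n)$, i.e.\ $\rho(Z)=\Omega_{\mathsf p}(1)$.

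The step I expect to be the main obstacle is this last lower bound on the cycle weight. The subtlety is that $\log|Y_{11}|$ need not be integrable---its negative tail is uncontrolled under the stated hypotheses---so one cannot simply apply a law of large numbers to the geometric mean of a long cycle. This is precisely where the structural result of \cite{l90} is needed: rather than an arbitrary cycle one works with a cycle of bounded length, whose existence (conditioned on any cycle) follows from summability of the Poisson means, and for bounded $\ell$ the crude union bound above suffices under the sole hypothesis $\mathbb P(Y_{11}=0)=0$. Verifying the joint Poisson convergence and the attendant truncation of long cycles in the first part is routine but must be carried out with some care to legitimize the infinite product.
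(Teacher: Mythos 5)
Your argument for the second assertion is correct but follows a genuinely different, and more elementary, route than the paper. The paper also reduces to a short directed cycle $C$ (of length $a(n)$ growing slowly, with all weights on it bounded below by $\theta(n)$), but it then bounds $\| Z^k\| \geq \theta (n)^k$ for $k=n^{n^n}$ and converts this into a bound on $\rho (Z)$ via Kozyakin's quantitative Gelfand formula (Theorem \ref{rho bound}), which requires an auxiliary bound on $\| Z\|^n/\| Z^n\|$ imported from \cite{h24} and a computation (Lemma \ref{lower bound limit}) showing the correction factors tend to $1$. You instead note that in a topological order of the strongly connected components $Z$ is block upper triangular, so $\mathrm{spec}(Z_C)\subseteq \mathrm{spec}(Z)$, and that the characteristic polynomial of the weighted $\ell$-cycle is $\lambda^{\ell}-\prod_i Y_{v_iv_{i+1}}$; the union bound over the $O_{\mathsf{p}}(1)$ edges then gives $\rho (Z)\geq 1/\phi (n)$ directly. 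This bypasses \cite{k09} and \cite{h24} entirely, and your remark that the bounded cycle length is what rescues you from the non-integrability of $\log |Y_{11}|$ is exactly the right point. (One detail to verify: that a nontrivial strongly connected component being ``a directed cycle'' in \cite{l90} means the induced subgraph has no chords, so that $Z_C$ really is the weighted adjacency matrix of a bare cycle.)

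The one genuine gap is your re-derivation of the acyclicity constant. The paper simply cites \cite[Theorem 2]{rrw20} for $\lim_{n}\mathbb{A}(n,d/n)=(1-d)e^{d+d^2/2}$, and you should too, because the naive independent-Poisson product does not produce this constant: in $\mathcal{G}_{\mathsf{d}}(n,d/n)$ a $2$-cycle $i\rightleftarrows j$ is a bona fide directed cycle (and a nontrivial strongly connected component) with $\mathbb{E}[C_2]\to d^2/2$, so the product of Poisson zero-probabilities over all cycle lengths $k\geq 2$ gives $\exp (-\sum_{k\geq 2}d^k/k)=(1-d)e^{d}$, not $(1-d)e^{d+d^2/2}=\exp (-\sum_{k\geq 3}d^k/k)$. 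Your sum starting at $k=3$ appears to be reverse-engineered from the answer rather than justified, and reconciling the two expressions is precisely the nontrivial content of \cite{rrw20}. The nilpotency observation (a topological order makes $Z$ strictly upper triangular, hence $\rho (Z)=0$ on the acyclic event) is fine and matches the paper.
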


Combining these bounds we have the following main result.

\begin{corollary} \label{main}
Suppose $Z$ satisfies the conditions of Theorem \ref{constant theorem} and \ref{constant lower bound}. Then $\rho (Z)=0$ with probability converging to $(1-d)e^{d+d^2/2}$, and with the remaining probability, $\rho (Z)=\Theta_p (1)$.
\end{corollary}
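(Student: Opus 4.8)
The plan is to combine the two theorems by splitting the probability space according to the acyclic/cyclic dichotomy of the underlying digraph $\mathcal{G}_{\mathsf{d}}(n,p)$. Write $\mathcal{A}_n$ for the event that $\mathcal{G}_{\mathsf{d}}(n,p)$ is acyclic and $\mathcal{C}_n=\mathcal{A}_n^c$ for the complementary (cyclic) event. By Theorem \ref{constant lower bound} we have $\mathbb{P}(\mathcal{A}_n)\to (1-d)e^{d+d^2/2}=:c$, and since $0<d<1$ this limit satisfies $0<c<1$, so both $\mathbb{P}(\mathcal{A}_n)$ and $\mathbb{P}(\mathcal{C}_n)$ stay bounded away from $0$ and $1$. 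This non-degeneracy is the one structural fact I would isolate first, because it is what legitimizes passing freely between unconditional and conditional ``with high probability'' statements.

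First I would pin down the probability that $\rho(Z)=0$. On $\mathcal{A}_n$ the matrix $Z$ is nilpotent, so $\rho(Z)=0$ and hence $\mathbb{P}(\rho(Z)=0)\geq \mathbb{P}(\mathcal{A}_n)\to c$. For the matching upper bound I use the conditional lower bound of Theorem \ref{constant lower bound}: conditioned on $\mathcal{C}_n$ we have $\rho(Z)=\Omega_{\mathsf{p}}(1)$, which in particular forces $\rho(Z)>0$ with high probability, so $\mathbb{P}(\rho(Z)=0\mid \mathcal{C}_n)\to 0$ and therefore $\mathbb{P}(\rho(Z)=0,\mathcal{C}_n)=\mathbb{P}(\rho(Z)=0\mid \mathcal{C}_n)\,\mathbb{P}(\mathcal{C}_n)\to 0$. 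Since $\mathcal{A}_n\subseteq\{\rho(Z)=0\}$, we get $\mathbb{P}(\rho(Z)=0)=\mathbb{P}(\mathcal{A}_n)+\mathbb{P}(\rho(Z)=0,\mathcal{C}_n)\to c$, as claimed.

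Next I would treat the remaining event $\mathcal{C}_n$ and show $\rho(Z)=\Theta_{\mathsf{p}}(1)$ conditioned on it. The lower bound $\rho(Z)=\Omega_{\mathsf{p}}(1)$ conditioned on $\mathcal{C}_n$ is exactly the second assertion of Theorem \ref{constant lower bound}. For the upper bound, Theorem \ref{constant theorem} gives $\rho(Z)=O_{\mathsf{p}}(1)$ unconditionally, i.e., for every $\phi$ with $\phi(n)\to\infty$ the event $\{\rho(Z)\leq \phi(n)\}$ holds with high probability; since $\mathbb{P}(\mathcal{C}_n)$ is bounded below by a positive constant, $\mathbb{P}(\rho(Z)>\phi(n)\mid \mathcal{C}_n)\leq \mathbb{P}(\rho(Z)>\phi(n))/\mathbb{P}(\mathcal{C}_n)\to 0$, so the same bound holds conditionally on $\mathcal{C}_n$. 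Together these give $\rho(Z)=\Theta_{\mathsf{p}}(1)$ on the remaining probability.

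The argument is essentially bookkeeping, with the substantive content already carried by the two theorems; the only genuine point to get right — and the main obstacle I would flag — is the interplay between the asymptotic notation of Definition \ref{def:asymptotic} and conditioning on $\mathcal{C}_n$. I would verify once and for all that $0<c<1$ for $d\in(0,1)$ so that $\mathbb{P}(\mathcal{C}_n)$ is bounded away from zero, since this single fact both transfers the unconditional $O_{\mathsf{p}}(1)$ bound to the conditional setting and keeps $\mathbb{P}(\rho(Z)=0,\mathcal{C}_n)$ negligible. I would also remark that whether one reads ``the remaining probability'' as conditioning on $\mathcal{C}_n$ or on $\{\rho(Z)\neq 0\}$ is immaterial, since these two events differ only by the asymptotically null set $\{\rho(Z)=0\}\cap\mathcal{C}_n$.
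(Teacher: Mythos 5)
Your proposal is correct and follows exactly the route the paper intends: the paper offers no written proof of the corollary beyond the phrase ``combining these bounds,'' and your decomposition into the acyclic/cyclic events, together with the observation that $(1-d)e^{d+d^2/2}\in(0,1)$ for $d\in(0,1)$ so that conditional and unconditional high-probability statements transfer freely, is precisely the bookkeeping that combination requires.
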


Next we state some minor results in the supercritical and subcritical regimes that can be easily deduced from the previous works of \cite{bbk20} and \cite{ks03}.

\subsection{Supercritical regime, $p\gg 1/n$}

For the regime $p\gg 1/n$, the lower bound for the spectral radius is readily provided by the sparse circular law in \cite{rt19}. For $p$ sufficiently larger than $1/n$, the estimates from \cite{bbk20} with truncation yields a matching upper bound. We consider the entries $Y_{ij}$ having a Weibull distribution with shape parameter $\alpha$, i.e.,
\begin{align*}
C_1e^{-t^{\alpha}}\leq \mathbb{P}(|Y_{ij}|\geq t) \leq C_2 e^{-t^{\alpha}}
\end{align*}
holds for some constants $C_1,C_2>0$ and all $t>1$.
For such entries, the regime where this method is applicable depends on the shape parameter $\alpha$. 

\begin{theorem} \label{polynomial theorem}
Suppose $Z$ has Weibull entries with shape parameter $\alpha >0$ and $p_n\gg (\log n)^{\frac{2}{\alpha}}/n$. Then 
\begin{align*}
\frac{\rho (Z)}{\sqrt{np_n}}\to 1
\end{align*}
in probability.
\end{theorem}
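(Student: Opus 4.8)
The plan is to prove Theorem~\ref{polynomial theorem} by establishing matching upper and lower bounds for $\rho(Z)$ in the supercritical regime $p_n\gg (\log n)^{2/\alpha}/n$. The lower bound direction should be the more routine one: since the smallest singular value and bulk spectral behavior of sparse matrices with these parameters are controlled by the sparse circular law of \cite{rt19}, the empirical spectral distribution of $Z/\sqrt{np_n}$ converges to the uniform measure on the unit disk. This forces a cloud of eigenvalues arbitrarily close to the unit circle, so $\liminf_n \rho(Z)/\sqrt{np_n}\geq 1$ in probability. I would make this precise by a standard argument: if $\rho(Z)<(1-\varepsilon)\sqrt{np_n}$ with non-negligible probability, then no eigenvalues lie in the annulus $\{1-\varepsilon<|z|<1\}$ after rescaling, contradicting the fact that the limiting circular measure assigns positive mass to that annulus.

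The upper bound is where the real work lies, and the plan is to adapt the moment-method estimates of \cite{bbk20} combined with a truncation step to handle the heavy-tailed Weibull entries. First I would truncate the entries $Y_{ij}$ at a level $t_n$ chosen so that the truncated matrix $\widetilde{Z}$ agrees with $Z$ with high probability; the Weibull tail bound $\mathbb{P}(|Y_{ij}|\geq t)\leq C_2 e^{-t^\alpha}$ shows that choosing $t_n$ of order $(\log n)^{1/\alpha}$ (up to constants, slightly above) makes the probability that any of the $n^2$ entries exceeds $t_n$ tend to zero via a union bound. This is precisely where the hypothesis $p_n\gg (\log n)^{2/\alpha}/n$ enters: the truncation level $t_n \sim (\log n)^{1/\alpha}$ must be negligible compared to $\sqrt{np_n}$, and $t_n/\sqrt{np_n}\to 0$ is exactly equivalent to $np_n \gg (\log n)^{2/\alpha}$.

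With the entries bounded by $t_n$, I would apply the high-moment trace estimate $\mathbb{E}\,\mathrm{tr}\,(\widetilde{Z}^*\widetilde{Z})^k$ or a direct bound on $\mathbb{E}\,\rho(\widetilde{Z})^{2k}\leq \mathbb{E}\,\|\widetilde{Z}^k\|_2^2$ with $k=k_n\to\infty$ growing like a power of $\log n$, following the combinatorial walk-counting scheme of \cite{bbk20}. The truncation controls the contribution of high-degree moments of the entries, and the resulting bound should yield $\rho(\widetilde{Z})\leq (1+\varepsilon)\sqrt{np_n}$ with high probability for every fixed $\varepsilon>0$. Transferring back to $Z$ via the high-probability equality $Z=\widetilde{Z}$ then gives the matching upper bound $\limsup_n \rho(Z)/\sqrt{np_n}\leq 1$, completing the convergence in probability.

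The main obstacle I anticipate is the quantitative interplay between the truncation level and the moment order. Pushing the trace moment method to the edge of the spectrum requires taking $k_n$ as large as possible to sharpen the estimate $\rho\leq (\mathbb{E}\,\mathrm{tr}(\widetilde{Z}^*\widetilde{Z})^k)^{1/2k}$, but a larger $k_n$ forces the combinatorial sum to pick up more factors of the truncation bound $t_n$, and balancing these two opposing constraints is exactly what pins down the threshold $(\log n)^{2/\alpha}/n$. Getting the bookkeeping right on which walk structures dominate --- so that the leading term is $(np_n)^k$ and the error terms are genuinely subleading under the stated hypothesis on $p_n$ --- will be the delicate part of the argument; the rest follows the template in \cite{bbk20} closely enough that I would cite it for the structural lemmas rather than reprove them.
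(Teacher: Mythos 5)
Your proposal matches the paper's proof in both directions: the lower bound is read off from the sparse circular law of \cite{rt19}, and the upper bound comes from truncating the Weibull entries at scale $(\log n)^{1/\alpha}$ and invoking the spectral radius estimate of \cite{bbk20} (the paper applies \cite[Theorem 2.11]{bbk20} directly to the rescaled matrix $H_{ij}=X_{ij}Y_{ij}/\sqrt{np_n}$ after conditioning on $\max_{i,j}|H_{ij}|\leq 1/q$, rather than re-running the walk-counting). The identification of the threshold $p_n\gg(\log n)^{2/\alpha}/n$ as exactly the condition $(\log n)^{1/\alpha}\ll\sqrt{np_n}$ is also the same mechanism as in the paper.
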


\subsection{Subcritical regime, $p\ll 1/n$}

In this case $\mathcal{G}_{\mathsf{d}}(n,p)$ is a forest with high probability (Lemma \ref{forest}), and $Z$ is nilpotent. The following theorem is readily deduced from this fact.

\begin{theorem} \label{subcritical theorem}
Suppose $Z$ has arbitrary entries and $np_n\to 0$ as $n\to\infty$. Then $\rho (Z)=0$ holds with high probability.
\end{theorem}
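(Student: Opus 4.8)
The plan is to reduce the statement to a purely combinatorial input about the absence of cycles, combined with the deterministic fact that the spectral radius of $Z$ is governed entirely by the support pattern of its nonzero entries. Because $\rho(Z)$ depends only on \emph{which} entries of $Z$ vanish and not on their actual values, the hypothesis of arbitrary entries $Y_{ij}$ creates no difficulty: the argument will go through for any distribution of the weights.

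First I would invoke Lemma \ref{forest}: when $np_n\to 0$, the digraph $\mathcal{G}_{\mathsf{d}}(n,p)$ is a forest with high probability, and in particular contains no directed cycle (including self-loops). The underlying quantitative input is a first-moment estimate. The expected number of directed cycles of length $k$ on $n$ labelled vertices is at most $(np_n)^k/k$, so the expected total number of directed cycles is bounded by $\sum_{k\geq 1}\frac{(np_n)^k}{k}=-\log(1-np_n)$, which tends to $0$ as $np_n\to 0$. By Markov's inequality, the probability that $\mathcal{G}_{\mathsf{d}}(n,p)$ admits any cycle tends to $0$, i.e.\ the digraph is acyclic with high probability.

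Next I would show that acyclicity forces $Z$ to be nilpotent. On the event that $\mathcal{G}_{\mathsf{d}}(n,p)$ is acyclic, fix a topological ordering of its vertices; conjugating $Z$ by the associated permutation matrix produces a strictly triangular matrix, all of whose eigenvalues are $0$. Equivalently, each entry of $Z^n$ is a sum over directed paths of length $n$, and an acyclic digraph on $n$ vertices supports no such path, so $Z^n=0$. In either formulation $Z$ is nilpotent, hence every eigenvalue of $Z$ is $0$ and $\rho(Z)=0$.

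Finally I would combine the two steps: the event $\{\rho(Z)=0\}$ contains the event that $\mathcal{G}_{\mathsf{d}}(n,p)$ is acyclic, which holds with high probability, so $\rho(Z)=0$ with high probability, as claimed. I do not expect a genuine obstacle here; the only probabilistic content is the first-moment bound behind Lemma \ref{forest}, and everything else is the deterministic chain \emph{acyclic} $\Rightarrow$ \emph{nilpotent} $\Rightarrow$ \emph{zero spectral radius}. The one point worth stating carefully is that nilpotency is a property of the sparsity pattern alone, which is precisely what lets the result hold for arbitrary entries.
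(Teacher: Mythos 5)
Your proposal is correct and follows essentially the same route as the paper, which simply cites Lemma \ref{forest} for acyclicity with high probability and then uses the chain \emph{acyclic} $\Rightarrow$ \emph{nilpotent} $\Rightarrow$ $\rho(Z)=0$. The only difference is cosmetic: you supply the first-moment bound on directed cycles directly instead of quoting the forest lemma for the undirected graph, and you spell out the nilpotency step, both of which the paper leaves implicit.
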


\begin{remark} \upshape{
In the case of Hermitian matrices with $p=d/n$, the spectral behavior of the largest eigenvalue analyzed in \cite{gn22, ghn22} was independent of the specific value of $d$. However, Corollary \ref{main} heavily depends on $d$ and our proofs cannot be extended to the case where $d\geq 1$.

For $\mathcal{G}_{\mathsf{d}}(n,d/n)$ where $d>0$ is fixed, it was recently proved in \cite[Theorem 1.1]{sss23.2} that the empirical spectral measure converges.
The authors also show that the limiting measure is  $\delta_0$, i.e., the Dirac mass at $0$ for $0<d\leq 1$, and a nontrivial distribution for $d>1$, where \cite[Theorem 1]{l90} (Theorem \ref{digraph} in this paper) is used for proving the first result. It seems that unlike its Hermitian analogues, the spectra of sparse non-Hermitian matrices in the critical regime are directly related with the phase transition of the underlying Erd\H{o}s-R\'enyi digraphs.
}
\end{remark}

\subsection{Notations}

The spectral radius of a matrix $A$ will be denoted $\rho (A)$.
We denote the Erd\H{o}s-R\'enyi graph (digraph) with $n$ vertices and edge probability $p$ by $\mathcal{G}(n,p)$ ($\mathcal{G}_{\mathsf{d}}(n,p)$).
The \textit{strongly connected components} of a digraph are the maximal subgraphs in which every pair of vertices can be connected by a directed path.

Consider two functions $f,g:\mathbb{N}\to [0,\infty )$. We write $f\ll g$ if $\lim_{n\to\infty}f(n)/g(n) =0$ and $f\lesssim g$ if $\lim_{n\to\infty}f(n)/g(n)<\infty$. We denote $f\approx g$ if both $f\lesssim g$ and $g\lesssim f$ hold.

\subsection{Organization of paper}

In Section 2 we provide some background results such as the sparse circular law, structural results of $\mathcal{G}_{\mathsf{d}}(n,d/n)$, and Gelfand's formula. Section 3 is devoted to the proof of upper and lower bound estimates in Theorems \ref{constant theorem} and \ref{constant lower bound} using the structural results of $\mathcal{G}_{\mathsf{d}}(n,d/n)$ in Theorem \ref{digraph}. Finally Section 4 contains a simple proof for Theorem \ref{polynomial theorem} by applying Theorem \ref{upper bound} from \cite{bbk20}.

\subsection{Acknowledgements}

The author thanks Kyeongsik Nam for pointing out helpful references.

\section{Backgrounds}

This section is devoted to some background results regarding the properties of sparse non-Hermitian random matrices and the structural analysis of Erd\H{o}s-R\'enyi digraphs which are crucial for the proof of our main theorems. 

\subsection{Upper bound results}

Suppose that an i.i.d. matrix $A=(A_{ij})_{1\leq i,j\leq n}$ has mean zero entries and satisfies the following conditions:
\begin{align} \label{conditions}
\mathbb{E}|A_{ij}|^2 \leq \frac{1}{n}, \quad \max_{1\leq i,j\leq n}|A_{ij}| \leq \frac{1}{q}\  \text{ a.s.}
\end{align}
Then the following upper tail estimate for $\rho (A)$ holds:
\begin{theorem}[{\cite[Theorem 2.11]{bbk20}}] \label{upper bound}
There exist universal constants $C,c>0$ such that for $q\leq n^{1/10}$ and $\varepsilon \geq 0$,
\begin{align*}
\mathbb{P}\left( \rho (A) \geq 1+\varepsilon \right) \leq Cn^{2-cq\log (1+\varepsilon )}.
\end{align*}
\end{theorem}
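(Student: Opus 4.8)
This is \cite[Theorem 2.11]{bbk20}; I record the strategy I would follow, which is the moment (trace) method.

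The starting point is to trade the spectral radius for a high matrix power and then for the Hilbert--Schmidt norm. By Gelfand's formula, for every $k\ge 1$,
\begin{align*}
\rho(A)=\rho(A^k)^{1/k}\le \|A^k\|_{\mathrm{op}}^{1/k}\le \|A^k\|_{\mathrm{HS}}^{1/k},
\end{align*}
so that $\rho(A)^{2k}\le \operatorname{tr}\big(A^k(A^k)^\ast\big)=\sum_{i,j}|(A^k)_{ij}|^2$. Markov's inequality then reduces the whole estimate to a single moment computation: for any $k\ge1$,
\begin{align*}
\mathbb{P}\big(\rho(A)\ge 1+\varepsilon\big)\le (1+\varepsilon)^{-2k}\,\mathbb{E}\operatorname{tr}\big(A^k(A^k)^\ast\big).
\end{align*}
The point of passing through $A^k$ is that, although $\|A\|_{\mathrm{op}}$ is bounded away from $\rho(A)$, the bound $\|A^k\|_{\mathrm{op}}^{1/k}\to\rho(A)$ becomes tight as $k\to\infty$, so the freedom in choosing $k$ is exactly what lets the tail be pushed down to $1+\varepsilon$.

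Next I would expand the moment over walks. Writing the matrix products out, $\mathbb{E}\operatorname{tr}(A^k(A^k)^\ast)$ is a sum over pairs of length-$k$ walks from a common origin $i$ to a common endpoint $j$ --- one drawn from $A$ (holomorphic), the other from $A^\ast$ (anti-holomorphic) --- of the expected products of the entries. Since the $A_{ab}$ are independent and mean zero, a term survives only if every directed edge $(a,b)$ is used at least twice in total; in particular a surviving configuration uses at most $k$ distinct directed edges. Using $\mathbb{E}|A_{ab}|^m\le n^{-1}q^{-(m-2)}$ for $m\ge2$, which follows from \eqref{conditions}, a configuration whose trace graph has $v$ vertices and $e$ distinct directed edges of multiplicities $m_1,\dots,m_e$ is bounded, after summing over the vertex labels, by
\begin{align*}
n^{v}\prod_{i=1}^{e} n^{-1}q^{-(m_i-2)} = n^{\,v-e}\,q^{-2(k-e)}\le n\,q^{-2(k-e)},
\end{align*}
using that the trace graph is connected, so $v\le e+1$. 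The leading contribution comes from $e=k$, $v=k+1$, i.e.\ trace graphs that are trees with every edge traversed exactly twice. A feature special to the non-Hermitian setting enters here: because $A_{ab}$ and $A_{ba}$ are independent, an undirected tree edge cannot be traversed once in each direction without killing the expectation, so at leading order the holomorphic and anti-holomorphic walks are forced to coincide along a single self-avoiding path from $i$ to $j$. This produces a clean main term of order $n$ with no exponential-in-$k$ prefactor, in contrast to the Catalan growth that would appear for Hermitian $\operatorname{tr}(A^{2k})$, which is precisely why the Gelfand bound stays effective for large $k$.

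The technical core --- and the step I expect to be the main obstacle --- is to control all remaining configurations and show that, for $k$ up to order $q\log n$,
\begin{align*}
\mathbb{E}\operatorname{tr}\big(A^k(A^k)^\ast\big)\le Cn^2 .
\end{align*}
Configurations whose trace graph carries a cycle cost a factor of $n$ in the vertex count ($v\le e$), while edges of multiplicity $m>2$ cost an extra $q^{-(m-2)}$; one must bound the number of walks in each topological class (encoded, as in the F\"uredi--Koml\'os scheme, by the excess $e-v+1$ and the high-multiplicity edges) and sum. This is where both the sparsity parameter $q$ and the hypothesis $q\le n^{1/10}$ are used: they guarantee that the corrections accumulate at a rate of at most $(1+O(1/q))$ per step and stay negligible against the leading term. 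Granting the display above, choosing $k\asymp q\log n$ turns the Markov factor into $(1+\varepsilon)^{-2k}=n^{-cq\log(1+\varepsilon)}$, and combining with the moment bound yields
\begin{align*}
\mathbb{P}\big(\rho(A)\ge 1+\varepsilon\big)\le Cn^{\,2-cq\log(1+\varepsilon)},
\end{align*}
with the constant $c$ taken small enough that the $(1+O(1/q))^k$ correction is absorbed into the spare power of $n$. This is the claimed bound.
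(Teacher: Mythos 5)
This statement is quoted verbatim from \cite[Theorem 2.11]{bbk20}; the paper you are reading offers no proof of it, so the only meaningful comparison is with the argument in that reference. Your outline does follow the same route as the original: the reduction $\rho(A)^{2k}\le\|A^k\|_{\mathrm{HS}}^2=\operatorname{tr}(A^k(A^k)^\ast)$, Markov's inequality, the expansion over pairs of length-$k$ walks, the moment bound $\mathbb{E}|A_{ab}|^m\le n^{-1}q^{-(m-2)}$ drawn from \eqref{conditions}, the $n^{v-e}q^{-2(k-e)}$ bookkeeping, and the final choice $k\asymp q\log n$ are all correct and are exactly the skeleton of the proof in \cite{bbk20}. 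Your observation that independence of $A_{ab}$ and $A_{ba}$ kills the Catalan-type growth and forces the two walks to coincide at leading order is also the right structural point, and it is what makes the Gelfand bound usable at $k\asymp q\log n$.

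That said, as a proof this is incomplete, and you say so yourself: the entire difficulty of \cite[Theorem 2.11]{bbk20} lives in the display $\mathbb{E}\operatorname{tr}(A^k(A^k)^\ast)\le Cn^2$ for $k$ up to order $q\log n$, which you grant rather than establish. The enumeration of walk configurations by excess and by high-multiplicity edges, and the verification that their total contribution is $(1+O(1/q))^{2k}$ times the leading term (so that it can be absorbed for $k\lesssim q\log n$ under $q\le n^{1/10}$), is a genuinely delicate F\"uredi--Koml\'os-type count and cannot be waved through; it is the reason the theorem has the stated range of validity in $q$ at all. So the verdict is: correct strategy, faithful to the cited source, but the combinatorial core is asserted rather than proven, which is acceptable only because the present paper also imports the result without proof.
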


\subsection{Lower bound results: sparse circular law}

The sparse version of the circular law was first proven in \cite{br19} for $p\gg \frac{\log^2 n}{n}$ and generalized to $p\gg \frac{1}{n}$ in \cite{rt19}. Recently, a simpler proof was given by \cite{sss23} yielding the same result.

\begin{theorem}[{\cite[Theorem 1.2]{rt19}}] \label{sparse circular law}
Let $A$ be an $n\times n$ i.i.d. matrix with entries $A_{ij}=B_{ij}Y_{ij}$ where $B_{ij}\sim \mathrm{Bernoulli}(p_n)$ and $Y_{ij}$ has zero mean and unit variance. If $\lim_{n\to\infty}np_n=\infty$, then the empirical spectral distribution of $\frac{1}{\sqrt{np_n}}A$ converges weakly in probability to the uniform measure on the unit disc of the complex plane.
\end{theorem}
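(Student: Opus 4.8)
The plan is to establish the result via Girko's Hermitization together with the logarithmic potential method, adapting the dense circular law argument to the sparse regime $np_n\to\infty$. Writing $M=\frac{1}{\sqrt{np_n}}A$ and $\mu_M=\frac{1}{n}\sum_i\delta_{\lambda_i(M)}$ for the empirical spectral distribution, the starting point is the identity
\begin{align*}
U_{\mu_M}(z):=\int_{\mathbb{C}}\log|z-w|\,d\mu_M(w)=\frac{1}{n}\log|\det(M-zI)|=\int_0^\infty \log s\,d\nu_{M-zI}(s),
\end{align*}
where $\nu_{M-zI}$ denotes the empirical distribution of the singular values of $M-zI$. Since $\Delta U_{\mu_M}=2\pi\mu_M$ in the distributional sense, to prove weak convergence of $\mu_M$ to the circular law $\mu_{\mathrm{circ}}$ it suffices to show that $U_{\mu_M}(z)$ converges in probability to $U_{\mu_{\mathrm{circ}}}(z)$ for Lebesgue-almost every $z\in\mathbb{C}$, provided one can justify passing the Laplacian through the limit.

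The first substantive step is to identify the limiting singular value distribution. For fixed $z$, I would analyze the Hermitization
\begin{align*}
H_z=\begin{pmatrix} 0 & M-zI \\ (M-zI)^* & 0 \end{pmatrix},
\end{align*}
whose eigenvalues are $\pm s_i(M-zI)$, and derive the self-consistent equation for its Stieltjes transform. Because $np_n\to\infty$, after truncating the entries $Y_{ij}$ at a scale that removes only negligibly many terms, the normalized sparse entries satisfy a Lindeberg-type condition, so the limiting measure $\nu_z$ coincides with the one appearing in the dense circular law; its logarithmic moment is exactly $U_{\mu_{\mathrm{circ}}}(z)$. This stage is essentially a Hermitian sparse random matrix computation and, while technical, is by now standard.

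The crux, and the main obstacle, is the \emph{uniform integrability} of $\log s$ against $\nu_{M-zI}$ near $s=0$, which is what upgrades the a.e.\ convergence of the log-potentials to a statement strong enough to pass to the Laplacian. The large-$s$ tail is harmless once one has an operator-norm bound $\|M\|=O(1)$ with high probability. Near zero one needs two ingredients: a polynomial lower bound $s_{\min}(M-zI)\geq n^{-C}$ holding with probability $1-o(1)$, and a bound on the number of small singular values of the form $s_{n-k}(M-zI)\gtrsim k/n$. In the sparse regime these least-singular-value estimates are genuinely harder than in the dense case, since many entries vanish and the anti-concentration (small-ball) inequalities underlying the Rudelson--Vershynin compressible/incompressible dichotomy degrade. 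Handling this requires sparse Littlewood--Offord type small-ball estimates together with a careful treatment of the kernel geometry for sparse random matrices, which is the technical heart of \cite{rt19}. Once both the least-singular-value bound and the intermediate-singular-value count are in place, the logarithmic potentials converge for almost every $z$, and Girko's formula delivers convergence of $\mu_M$ to the uniform measure on the unit disc.
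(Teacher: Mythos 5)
This statement is not proved in the paper at all: it is quoted verbatim from \cite{rt19} and used as a black box (only to supply the lower bound in Theorem \ref{polynomial theorem}), so there is no internal proof to compare your attempt against. Your outline does correctly reproduce the architecture of the actual proof in \cite{rt19} --- Girko's Hermitization, convergence of the log-potential $U_{\mu_M}(z)$ to $U_{\mu_{\mathrm{circ}}}(z)$ for a.e.\ $z$, and uniform integrability of $\log s$ near $s=0$ via a polynomial lower bound on $s_{\min}(M-zI)$ plus a count of intermediate singular values. You also correctly identify where the difficulty sits. But as a proof it has a genuine gap, because the two steps you flag as "the technical heart" are exactly the content of the theorem under its minimal hypotheses: with only $\mathbb{E}Y_{ij}=0$, $\mathbb{E}|Y_{ij}|^2=1$ and $np_n\to\infty$ arbitrarily slowly, the standard Rudelson--Vershynin compressible/incompressible machinery and Littlewood--Offord inequalities do not apply off the shelf, and deferring them to "\,the technical heart of \cite{rt19}\," is circular when \cite{rt19} is the statement being proved.

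There is also a concrete error in your treatment of the large-$s$ tail: you assert that $\|M\|=O(1)$ with high probability. Under a bare second-moment assumption this is false in general --- a single entry $Y_{ij}$ of size $\gg \sqrt{np_n}$ already forces $\|M\|\gg 1$, and with $n^2p_n$ nonzero entries and no tail control such entries can occur. (Indeed, the present paper only obtains $\rho(Z)/\sqrt{np_n}\to 1$ under a Weibull tail assumption and $p_n\gg(\log n)^{2/\alpha}/n$, precisely because norm-type bounds fail under minimal moments.) The correct way to handle the upper tail of $\nu_{M-zI}$ is through the second moment $\int s^2\,d\nu_{M-zI}(s)=\frac{1}{n}\|M-zI\|_F^2=O_{\mathsf p}(1)$, which bounds $\int_{s\ge T}\log s\,d\nu_{M-zI}(s)$ without any operator-norm control. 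Your truncation/Lindeberg step for identifying the limiting singular value law is likewise delicate when $np_n\to\infty$ slowly, though that part is more standard. In short: right roadmap, but the proposal proves nothing that was not already being assumed.
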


\subsection{Structure of $\mathcal{G}_{\mathsf{d}}(n,d/n)$}

The Erd\H{o}s-R\'enyi digraph $\mathcal{G}_{\mathsf{d}}(n,p)$ is a random digraph with $n$ vertices and each directed edge chosen with independent probability $p$. With high probability, we may assume that $\mathcal{G}_{\mathsf{d}}(n,d/n)$ has no self-loops. Then the following theorem characterizes the structure of $\mathcal{G}_{\mathsf{d}}(n,d/n)$ in terms of directed cycles with relatively short length.

\begin{theorem}[{\cite[Theorem 1]{l90}}] \label{digraph}
Let $d>0$ be a positive constant and $\omega$ be a function such that $\omega (n)\to \infty$ as $n\to\infty$.
\begin{enumerate}
\item If $d<1$, then with high probability, each nontrivial (i.e., containing more than one vertex) strongly connected component of $\mathcal{G}_{\mathsf{d}}(n,d/n)$ is a directed cycle with length smaller than $\omega (n)$.
\item If $d>1$, there exists a positive constant $\alpha =\alpha (d)$ such that with high probability, $\mathcal{G}_{\mathsf{d}}(n,d/n)$ contains a strongly connected component larger than $\alpha n$, and every other nontrivial strongly connected component is a directed cycle with length smaller than $\omega (n)$.
\end{enumerate}
\end{theorem}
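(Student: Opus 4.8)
The plan is to treat the two regimes separately: the subcritical case (1) reduces to two first-moment estimates, whereas the supercritical case (2) requires a branching-process analysis of directed reachability combined with a first-moment argument. The organizing quantity is the \emph{excess} of a subdigraph $H$, namely $|E(H)|-|V(H)|$. A strongly connected digraph on $k\ge 1$ vertices has at least $k$ directed edges, with equality precisely when it is a single directed cycle; hence showing that every nontrivial strongly connected component is a short directed cycle splits into ruling out (i) long directed cycles and (ii) strongly connected subdigraphs of positive excess.

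Now part (1), with $d<1$. Fix $\omega(n)\to\infty$. For (i), the number of directed cycles on $k$ labeled vertices is $(k-1)!$ and each requires $k$ prescribed edges, so using $\binom{n}{k}(k-1)!\le n^{k}/k$ and $p=d/n$,
\[
\mathbb{E}\big[\#\{\text{directed cycles of length}\ge \omega(n)\}\big]\ \le\ \sum_{k\ge \omega(n)}\frac{n^{k}}{k}\Big(\frac{d}{n}\Big)^{k}\ =\ \sum_{k\ge \omega(n)}\frac{d^{k}}{k}\ \le\ \frac{d^{\omega(n)}}{1-d}\ \longrightarrow\ 0,
\]
so by Markov's inequality there is, with high probability, no directed cycle of length $\ge\omega(n)$. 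For (ii) I would argue that any strongly connected subdigraph of positive excess contains a \emph{theta}, i.e.\ two distinct directed cycles meeting along a common directed path (three internally disjoint directed paths between two branch vertices); such a witness has excess exactly one. Bounding the number $T(k)$ of thetas on $k$ labeled vertices by $k^{O(1)}k!$ (choose the two branch vertices and distribute the remaining vertices among the three paths) gives
\[
\mathbb{E}\big[\#\{\text{thetas}\}\big]\ \le\ \sum_{k\ge 3}\binom{n}{k}\,T(k)\,p^{k+1}\ \lesssim\ \frac{d}{n}\sum_{k\ge 3}k^{O(1)}d^{k}\ =\ O\!\Big(\frac1n\Big)\ \longrightarrow\ 0,
\]
the series converging because $d<1$. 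Hence, with high probability, no theta appears, so every strongly connected subdigraph has excess zero, i.e.\ is a single cycle, necessarily of length $<\omega(n)$ by (i). This gives (1).

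For part (2), with $d>1$, I would first extract the giant strongly connected component. Exploring the out-neighborhoods from a fixed vertex couples, while the explored set is $o(n)$, with a Galton--Watson process with $\mathrm{Poisson}(d)$ offspring; being supercritical it survives with probability $\beta=\beta(d)>0$, the positive root of $1-\beta=e^{-d\beta}$, in which case the reachable (descendant) set is of linear size, and otherwise it is $O(\log n)$. The same dichotomy holds for the ancestor set under edge reversal, and the two are asymptotically independent, so a vertex lies in a linear strongly connected component exactly when both its descendant and ancestor sets are giant; a standard sprinkling argument shows these overlap in a single component of size $(\beta^{2}+o(1))n$, so any $\alpha<\beta^{2}$ works. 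To see that every other nontrivial component is a short cycle, I would rerun the estimates of part (1): for \emph{bounded} sizes the theta count still gives $\sum_{k\le K}k^{O(1)}d^{k+1}/n=O(1/n)$, excluding small complex components, while the decisive point is that for $d>1$ both the cycle series $\sum_k d^{k}/k$ and the theta series diverge, reflecting that all long cycles and all positive-excess structure of size $\ge\omega(n)$ are absorbed into the unique giant; one therefore needs the size-gap statement that, with high probability, there is no strongly connected set with size in $[\omega(n),\alpha n]$.

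I expect the supercritical case to be the main obstacle. Turning the branching-process heuristic into a proof requires a coupling that stays valid up to the point where the explored set becomes linear, after which uniqueness of the giant and the value of $\alpha(d)$ must be obtained by a separate two-round exposure (sprinkling) argument. More fundamentally, for $d>1$ the first moment no longer controls long cycles or complex subdigraphs, since the relevant series diverge; the whole content of the theorem then lies in showing that this large-scale structure is confined to a single giant strongly connected component, with a genuine gap below it --- this is the technical heart of \cite{l90}. By contrast, the subcritical statement (1), which is all that Theorems \ref{constant theorem} and \ref{constant lower bound} of the present paper invoke, follows cleanly from the two first-moment estimates above.
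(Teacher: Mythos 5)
This theorem is imported from \cite{l90} and the paper gives no proof of it; the only point of contact is that in Section 3 the paper quotes, from \L uczak's proof, the bounds $\mathbb{P}(\mathcal{C}_{\omega(n)}^c)\leq d^{\omega(n)}/(1-d)$ and $\mathbb{E}N\leq d^m/(1-d)$, and your first-moment computation for long directed cycles, $\sum_{k\geq\omega(n)}\binom{n}{k}(k-1)!\,p^k\leq\sum_{k\geq\omega(n)}d^k/k\leq d^{\omega(n)}/(1-d)$, reproduces exactly those bounds. Your overall strategy for part (1) --- excess zero forces a single cycle, so it suffices to kill long cycles and positive-excess strongly connected subdigraphs by first moments --- is the right one and is essentially \L uczak's subcritical argument. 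However, your witness for positive excess is wrong as stated: it is \emph{not} true that every strongly connected digraph of positive excess contains a theta in the sense of three internally disjoint directed paths between two branch vertices. The union of two directed triangles sharing a single vertex $v$ (a figure-eight) is strongly connected, has $5$ vertices and $6$ edges, and its only vertex of degree greater than two is $v$, so it contains no such theta. The repair is standard and does not change your counting: take as witness a directed cycle $C$ together with a directed handle, i.e.\ a directed path from some $x\in C$ to some $y\in C$ (allowing $x=y$) that is internally disjoint from $C$ and contains at least one edge outside $C$. Every strongly connected digraph with more edges than vertices contains such a configuration (some vertex has out-degree at least two; follow the second out-edge until you return to $C$), the number of such configurations on $k$ labeled vertices is still $k^{O(1)}k!$ with $k+1$ edges, and the series $\frac{d}{n}\sum_k k^{O(1)}d^k$ still converges for $d<1$. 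Note that the generic witness ``connected subgraph with excess one'' would \emph{not} work here, since the number of those on $k$ vertices grows like $k!\,(2e)^k$ up to polynomial factors and the series would only converge for $d<1/(2e)$; the rigidity of the cycle-plus-handle structure is what makes the bound valid for all $d<1$.

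Part (2) is not proved: you give the correct heuristic (forward/backward exploration coupled with a $\mathrm{Poisson}(d)$ branching process, sprinkling for uniqueness, $\alpha<\beta^2$), but the coupling up to linear scales, the two-round exposure, and above all the gap statement that no strongly connected vertex set of size between $\omega(n)$ and $\alpha n$ exists outside the giant are all asserted rather than established, as you yourself acknowledge. Since the first-moment series diverge for $d>1$, none of the part (1) machinery substitutes for this. For the purposes of the present paper this is a limited defect --- Theorems \ref{constant theorem} and \ref{constant lower bound} only invoke the $d<1$ case, and your part (1), once the witness is corrected, is a complete and self-contained proof of everything the paper actually uses --- but as a proof of the theorem as stated it is incomplete.
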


For $d<1$, part (1) provides a simple characterization of all strongly connected components, which is crucial for the proof of \ref{constant theorem}. In contrast, by part (2), a large complex component arises for $d>1$ which makes our methods inapplicable. 

When $d<1$, the probability that $\mathcal{G}_{\mathsf{d}}(n,d/n)$ is acyclic converges to a nonzero number as $n\to\infty$, and this limit can be explicitly calculated:

\begin{theorem}[{\cite[Theorem 2]{rrw20}}] \label{d<1 acyclic}
Let $\mathbb{A}(n,p)$ denote the probability that $\mathcal{G}_{\mathsf{d}}(n,p)$ is acyclic. For $p=d/n$ with fixed $d<1$,
\begin{align*}
\lim_{n\to\infty}\mathbb{A}(n,p) =(1-d)e^{d+d^2/2}.
\end{align*}
\end{theorem}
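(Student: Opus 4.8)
The plan is to pin down the limiting constant through a Poisson approximation for the counts of short directed cycles, using Theorem~\ref{digraph} to rule out the long ones. After conditioning away self-loops, let $C_\ell$ denote the number of directed cycles of length $\ell\ge 3$ in $\mathcal{G}_{\mathsf{d}}(n,d/n)$; these are the cycles governing the constant stated in \cite{rrw20}. A directed $\ell$-cycle is an ordered sequence of $\ell$ distinct vertices taken up to cyclic rotation, so there are $(n)_\ell/\ell$ of them with $(n)_\ell=n(n-1)\cdots(n-\ell+1)$, each present with probability $(d/n)^\ell$; hence
\begin{equation*}
\mathbb{E}[C_\ell]=\frac{(n)_\ell}{\ell}\Big(\frac{d}{n}\Big)^{\ell}\longrightarrow \lambda_\ell:=\frac{d^\ell}{\ell}\qquad (n\to\infty).
\end{equation*}
Since the digraph is acyclic exactly when every $C_\ell$ vanishes, the target is to show $\mathbb{P}(\text{acyclic})\to\prod_{\ell\ge 3}e^{-\lambda_\ell}$ and to evaluate the product.

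First I would show that for each fixed $L$ the vector $(C_3,\dots,C_L)$ converges in distribution to independent Poisson variables with means $\lambda_3,\dots,\lambda_L$. The natural tool is the method of factorial moments (Brun's sieve): compute the joint falling-factorial moments $\mathbb{E}\big[\prod_{\ell}(C_\ell)_{m_\ell}\big]$ by summing over ordered tuples of distinct cycles, and show they converge to $\prod_\ell \lambda_\ell^{m_\ell}$. The mechanism is that any tuple of cycles that is not pairwise vertex-disjoint spans strictly fewer vertices than a disjoint tuple, so overlapping configurations contribute a vanishing fraction as $n\to\infty$; only the vertex-disjoint tuples survive, and they reproduce the product of Poisson factorial moments. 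This gives $\mathbb{P}(C_3=\dots=C_L=0)\to\exp\!\big(-\sum_{\ell=3}^{L}\lambda_\ell\big)$.

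Next I would send $L\to\infty$ by splitting the remaining cycle lengths into two ranges. Cycles of length between $L$ and $\omega(n)$ are killed by a first-moment bound, since their expected total number is at most $\sum_{\ell>L}\lambda_\ell$, which tends to $0$ as $L\to\infty$ because $d<1$ makes $\sum_\ell d^\ell/\ell$ convergent. Cycles of length at least $\omega(n)$ are excluded with high probability by Theorem~\ref{digraph}(1): every directed cycle lies in a nontrivial strongly connected component, and with high probability each such component is itself a directed cycle of length below $\omega(n)$. Patching the three ranges yields $\mathbb{P}(\mathcal{G}_{\mathsf{d}}(n,d/n)\ \text{acyclic})\to\exp\!\big(-\sum_{\ell\ge 3}\lambda_\ell\big)$, and the series evaluates via $\sum_{\ell\ge 1}d^\ell/\ell=-\log(1-d)$ to $\sum_{\ell\ge 3}d^\ell/\ell=-\log(1-d)-d-\tfrac{d^2}{2}$, so the limit is $(1-d)e^{d+d^2/2}$.

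The hard part will be the rigorous Poisson approximation rather than the bookkeeping: because $p=d/n$ varies with $n$ and short cycles of different lengths can share vertices and arcs, the factorial-moment computation needs a careful estimate showing that overlapping tuples are of lower order, and the three length-ranges must be glued together with bounds uniform in the cutoff $L$. Once the joint Poisson limit and the uniform tail control are in place, the evaluation of the constant is immediate.
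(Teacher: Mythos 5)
The paper itself does not prove this statement; it is imported verbatim as \cite[Theorem 2]{rrw20}, whose proof goes through exact generating-function identities for acyclic digraphs together with analytic estimates (machinery needed there because the critical and supercritical windows are also treated). Your method-of-moments argument is therefore a genuinely different and more elementary route, and its architecture --- joint Poisson limits for cycle counts of bounded length via factorial moments, a first-moment bound for the remaining lengths, and an exchange of limits uniform in the cutoff $L$ --- is the standard correct way to prove such a limit in the strictly subcritical regime. One simplification: the ``middle'' and ``long'' ranges need not be separated, since $\mathbb{E}[C_\ell]=\frac{(n)_\ell}{\ell}(d/n)^\ell\leq d^\ell/\ell$ holds for every $\ell\leq n$, so Markov's inequality applied to $\sum_{\ell>L}C_\ell$ gives the bound $\frac{d^{L+1}}{(1-d)(L+1)}$, uniform in $n$; the appeal to Theorem~\ref{digraph}(1) is unnecessary and only imports an extra black box.

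The genuine gap is at $\ell=2$. In the model as defined in this paper (each of the $n(n-1)$ ordered pairs is an arc independently with probability $d/n$), a digon $\lbrace i\to j,\ j\to i\rbrace$ is a directed cycle, the number of digons converges in distribution to $\mathrm{Poisson}(d^2/2)$, and acyclicity forces there to be none. Running your own argument honestly from $\ell=2$ therefore yields $\exp\bigl(-\sum_{\ell\geq 2}d^\ell/\ell\bigr)=(1-d)e^{d}$, not $(1-d)e^{d+d^2/2}$; and the discrepancy is not cosmetic, since for moderate $d$ (e.g.\ $d=1/2$) the stated constant exceeds $e^{-d^2/2}$, which is an upper bound for the acyclicity probability in any model where digons occur with this Poisson intensity and count as cycles. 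Starting the count at $\ell=3$ because ``these are the cycles governing the constant stated in \cite{rrw20}'' is reverse-engineering the starting index from the answer rather than deriving it. To close the gap you must pin down the convention in \cite{rrw20} --- either its model forbids digons or its notion of cycle excludes length $2$ --- and reconcile that with the digraph model used in this paper; as written, your proof establishes the displayed limit only under an unstated assumption, and under the paper's stated model it would produce a different constant.
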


Finally we state a lemma on the structure of $\mathcal{G}(n,d/n)$.

\begin{lemma}[{\cite[Lemma 2.1 (ii)]{ks03}}] \label{forest}
If $np\to 0$, then $\mathcal{G}(n,p)$ is a forest with high probability.
\end{lemma}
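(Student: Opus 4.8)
The plan is to apply the first moment method to the number of cycles in $\mathcal{G}(n,p)$. Since an undirected simple graph is a forest precisely when it contains no cycle, it suffices to show that with high probability $\mathcal{G}(n,p)$ has no cycle. Let $N_k$ denote the number of cycles of length $k$ (necessarily $k\geq 3$, as the graph is simple) and set $N=\sum_{k\geq 3}N_k$ for the total number of cycles. I would bound $\mathbb{E}[N]$ and then invoke Markov's inequality.

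First I would count the expected number of cycles of each fixed length. The number of distinct cycles of length $k$ spanning a $k$-subset of the vertex set $[n]$ equals $\binom{n}{k}\frac{(k-1)!}{2}$, and each such cycle is present in $\mathcal{G}(n,p)$ with probability $p^k$, since it uses exactly $k$ edges. Hence
\begin{align*}
\mathbb{E}[N_k]=\binom{n}{k}\frac{(k-1)!}{2}\,p^k=\frac{n(n-1)\cdots (n-k+1)}{2k}\,p^k\leq \frac{(np)^k}{2k}.
\end{align*}

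Summing over all admissible lengths and writing $x=np$, for $n$ large enough that $x<1$ I would estimate, using $\frac{1}{2k}\leq\frac{1}{6}$ for $k\geq 3$,
\begin{align*}
\mathbb{E}[N]=\sum_{k=3}^{n}\mathbb{E}[N_k]\leq \sum_{k=3}^{\infty}\frac{x^k}{2k}\leq \frac{1}{6}\sum_{k=3}^{\infty}x^k=\frac{x^3}{6(1-x)}.
\end{align*}
Since $x=np\to 0$ by hypothesis, the right-hand side tends to $0$, so $\mathbb{E}[N]\to 0$. By Markov's inequality $\mathbb{P}(N\geq 1)\leq \mathbb{E}[N]\to 0$, whence $\mathbb{P}(\mathcal{G}(n,p)\text{ is a forest})=\mathbb{P}(N=0)\to 1$.

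There is no substantial obstacle here: the only point requiring a moment's care is that the per-length bound on $\mathbb{E}[N_k]$ must be summable uniformly across all cycle lengths $k$, and this is exactly where the assumption $np\to 0$ enters, rendering the geometric tail $\frac{x^3}{6(1-x)}$ negligible.
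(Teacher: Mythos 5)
Your proof is correct: the first-moment count of cycles, $\mathbb{E}[N_k]\leq (np)^k/(2k)$ summed over $k\geq 3$ and killed by Markov's inequality, is exactly the standard argument, and it is essentially the one given for Lemma 2.1(ii) in the cited reference \cite{ks03}; the paper itself imports the lemma without reproving it. No gaps.
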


\subsection{Gelfand's formula}

Gelfand's formula states that for the spectral radius $\rho (\cdot )$ and any matrix norm $\| \cdot \|$, 
\begin{align}
\rho (A) =\lim_{k\to\infty}\| A^k\|^{1/k}.
\end{align}
Furthermore, the quantities $\| A^k\|^{1/k}$ decreases to $\rho (A)$ when $\| \cdot \|$ is a \textit{consistent} matrix norm, which includes the spectral norm
\begin{align}
\| A\|_{2\to 2} := \sup_{x\in \mathbb{C}^n, \| x\|_2=1}\| Ax\|_2
\end{align}
that we will mainly use. From now on we will simply denote the spectral norm by $\| \cdot \|$. For a matrix $A$ with $\rho (A)\neq 0$, there is a corresponding lower bound
\begin{align}
\gamma^{(1+\log k)/k}\| A^k\|^{1/k} \leq \rho (A) \leq \| A^k\|^{1/k}
\end{align}
with some constant $\gamma \in (0,1)$. The following theorem provides a more explicit lower bound which plays a key role in proving Theorem \ref{constant lower bound}.
\begin{theorem}[{\cite[Theorem 1]{k09}}] \label{rho bound}
For an $n\times n$ matrix $A$ with $n>2$ and for $k\geq 1$,
\begin{align*}
C_n^{-\sigma_n(k)/k}\left( \frac{\| A\|^n}{\| A^n\|}\right)^{-\nu_n(k)/k}\| A^k\|^{1/k}\leq \rho (A)\leq \| A^k\|^{1/k},
\end{align*}
where
\begin{align*}
C_n =n^{3n/2}, \quad \sigma_n(k) = \frac{(n-1)^3}{(n-2)^2}k^{\frac{\log (n-1)}{\log n}}, \quad \nu_n(k) =\frac{(n-1)^2}{n-2}k^{\frac{\log (n-1)}{\log n}}.
\end{align*}
\end{theorem}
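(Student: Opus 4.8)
The upper bound $\rho(A)\le \|A^k\|^{1/k}$ is immediate and needs no hypothesis on $n$ or $k$: by the spectral mapping theorem $\rho(A)^k=\rho(A^k)\le \|A^k\|$. So the entire content lies in the lower bound, which after raising to the $k$-th power is equivalent to the upper estimate
\[
\|A^k\|\le \rho(A)^k\, C_n^{\sigma_n(k)}\Big(\tfrac{\|A\|^n}{\|A^n\|}\Big)^{\nu_n(k)} .
\]
The plan is therefore to control how fast $\|A^k\|^{1/k}$ decreases to its limit $\rho(A)$, with every correction factor written in terms of the computable norms $\|A\|$, $\|A^n\|$, $\|A^k\|$ (note that $\sigma_n(k)/k$ and $\nu_n(k)/k$ tend to $0$, so all corrections tend to $1$ and Gelfand's formula is recovered).

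My first step would be to expose the mechanism through a Schur triangularization $A=Q(D+N)Q^{*}$ with $Q$ unitary, $D$ diagonal carrying the eigenvalues, and $N$ strictly upper triangular, hence nilpotent with $N^{n}=0$. Since $\|A^k\|=\|(D+N)^k\|$ and every monomial in the expansion of $(D+N)^k$ containing $n$ or more factors of $N$ vanishes (each $N$ strictly lowers the index on which it acts while $D$ preserves it), one obtains $\|A^k\|\le \sum_{j=0}^{n-1}\binom{k}{j}\rho(A)^{k-j}\|N\|^{j}$. This already gives a quantitative Gelfand convergence and isolates the departure from normality $\|N\|$ as the only non-spectral quantity, with a correction polynomial in $k$.

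The remaining, and genuinely delicate, step is to remove the unobservable $\|N\|$ in favor of the ratio $\|A\|^{n}/\|A^{n}\|$, and to trade the polynomial factor $\binom{k}{n-1}\sim k^{n-1}$ for the subexponential $k^{\log(n-1)/\log n}$ appearing in $\sigma_n$ and $\nu_n$. I would do this by a multiplicative recursion across geometric scales of ratio $n$: apply the preceding estimate not to $A$ directly but to the iterates $A^{m}$, relating $\|A^{nm}\|=\|(A^{m})^{n}\|$ to $\|A^{m}\|$ and $\rho(A)^{nm}$, and unroll this from $m=1$ up to $m\approx k$ in $\log_n k$ steps. If each scale contributes a multiplicative error of order $n-1$, the compounded error is $(n-1)^{\log_n k}=k^{\log(n-1)/\log n}$, exactly the shape of $\sigma_n(k)$ and $\nu_n(k)$; summing the per-scale constants as a geometric series produces the rational prefactors $(n-1)^3/(n-2)^2$ and $(n-1)^2/(n-2)$, while the crude dimensional and symmetric-function constants (from bounding the Cayley--Hamilton coefficients of each $A^{m}$ by elementary symmetric functions of its eigenvalues) are absorbed into $C_n=n^{3n/2}$. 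The ratio $\|A\|^{n}/\|A^{n}\|$ enters as the computable surrogate for $\|N\|$ at the base scale, since $\|A^{n}\|=\|A\|^{n}$ precisely when $A$ is normal.

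The main obstacle is this last conversion. Iterating Cayley--Hamilton naively blows the bound up exponentially in $k$: the associated recurrence $t^{n}=(1+t)^{n}-t^{n}$ has a dominant root of size $\approx n/\log 2$, so the recursion must be arranged to capture the cancellation that forces $\|A^k\|^{1/k}\to\rho(A)$, rather than merely the submultiplicativity $\|A^{nm}\|\le\|A^{m}\|^{n}$. Getting the exponent to emerge as $k^{\log(n-1)/\log n}$ with the precise constants $C_n$, $\sigma_n$, $\nu_n$, and verifying that $\|A\|^{n}/\|A^{n}\|$ genuinely dominates the contribution of $\|N\|$ uniformly over the scales, is where essentially all the work lies; the two endpoint steps above are routine by comparison.
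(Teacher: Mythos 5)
The paper does not prove this statement at all: it is imported verbatim from Kozyakin \cite{k09}, so there is no internal proof to compare against. Judged on its own terms, your attempt is a well-informed roadmap rather than a proof. The two steps you actually carry out are the easy ones: the upper bound $\rho(A)\le\|A^k\|^{1/k}$, and the quantitative Gelfand estimate $\|A^k\|\le\sum_{j=0}^{n-1}\binom{k}{j}\rho(A)^{k-j}\|N\|^{j}$ from the Schur form. Everything that distinguishes the theorem from a generic "the correction factor tends to $1$" statement --- the replacement of $\|N\|$ by the computable ratio $\|A\|^{n}/\|A^{n}\|$, the exponent $k^{\log(n-1)/\log n}$, and the explicit constants $C_n$, $\sigma_n$, $\nu_n$ --- is deferred to a recursion you describe but never set up, and you say so yourself ("where essentially all the work lies"). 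That is a genuine gap, not a presentational one.

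Concretely, the engine that closes the argument in \cite{k09} is not the Schur decomposition but Cayley--Hamilton: the coefficients of the characteristic polynomial are elementary symmetric functions of the eigenvalues, hence bounded by $\binom{n}{i}\rho(A)^{i}$, which gives $\|A^{n}\|\le\sum_{i=1}^{n}\binom{n}{i}\rho(A)^{i}\|A\|^{n-i}$ with no reference to $\|N\|$. Applied to $A^{m}$ this yields the self-referential recursion $\|A^{nm}\|\le c_n\max_{1\le i\le n}\rho(A)^{im}\|A^{m}\|^{n-i}$, whose worst term $i=1$ carries the exponent $n-1$ on $\|A^{m}\|$; unrolling over $m=1,n,n^{2},\dots$ is what compounds to $(n-1)^{\log_n k}=k^{\log(n-1)/\log n}$, and the ratio $\|A\|^{n}/\|A^{n}\|$ enters once, at the base scale $m=1$. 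Your Schur step is therefore a detour that introduces precisely the unobservable quantity ($\|N\|$) you must then eliminate; your own closing paragraph correctly identifies that you have no mechanism for the elimination. Until the recursion is written down, the induction closed with the stated constants, and the bookkeeping done for $k$ not a power of $n$ and for the degenerate cases $\rho(A)=0$ or $A^{n}=0$, the lower bound is not established.
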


\section{Proof of Theorems \ref{constant theorem} and \ref{constant lower bound}}

In this section we prove Theorems \ref{constant theorem} and \ref{constant lower bound}. We start with a brief examination that the upper bound from direct application of Theorem \ref{upper bound} is insufficient for our results.

\subsection{Weak upper bound}

Let $X_{ij}\sim \mathrm{Bernoulli}(d/n)$ where $d>0$ is fixed and $Y_{ij}$ be a Weibull distribution with shape parameter $\alpha >0$ satisfying $\mathbb{E}|Y_{ij}|^2=1$.
We consider the i.i.d. matrix $H$ with entries
\begin{align*}
H_{ij}:= \frac{1}{(\log n)^{\frac{1}{\alpha}}}X_{ij}Y_{ij}.
\end{align*}
Clearly $\mathbb{E}|H_{ij}|^2 \leq \frac{1}{n}$. For any constant $t>1$, 
\begin{align*}
\mathbb{P}\left( \max_{1\leq i,j\leq n}|H_{ij}|> t\right) & \leq \sum_{1\leq i,j\leq n}\mathbb{P}(|H_{ij}|>t) =n^2 \mathbb{P}(|Y_{ij}| >t\log^{\frac{1}{\alpha}} n)\mathbb{P}(X_{ij}=1) \nonumber\\
& \lesssim dne^{-t^{\alpha}\log n} =o(1).
\end{align*}
Now we condition on the high probability event
\begin{align*}
\mathcal{A}_{q,n} :=\left\lbrace \max_{1\leq i,j\leq n}|H_{ij}|\leq \frac{1}{q}\right\rbrace 
\end{align*}
for some $0<q<1$. Applying Theorem \ref{upper bound} to $H$ yields
\begin{align*}
\mathbb{P}(\rho (H) \geq 1+\varepsilon )\leq Cn^{2-cq\log (1+\varepsilon )} =o(1)
\end{align*}
conditioned on $\mathcal{A}_{q,n}$, for sufficiently large $\varepsilon >0$ depending on $c$ and $q$. Thus we have $\rho (Z) =O(\log^{\frac{1}{\alpha}}n)$ with high probability, which is far weaker than Theorem \ref{constant theorem}.
Also note that the estimate depends heavily on the Weibull tail condition. Much weaker bounds will be obtained if we only assume finite moment conditions on $Y_{ij}$, for instance, an $O(\sqrt{n})$ bound when only assuming $\mathbb{E}|Y_{ij}|^2=1$.

\subsection{Proof of Theorem \ref{constant theorem}}

Consider $X\sim \mathcal{G}_{\mathsf{d}} (n,d/n)$ and $Z=X\odot Y$. Let $\mathcal{C}_{g(n)}$ be the event where all nontrivial strongly connected components of $X$ are directed cycles with size smaller than $g(n)$. For simplicity, we now use the term \textit{cycle} to mean directed cycles, except when the directedness must be emphasized. Condition on the high probability event $\mathcal{C}_{\omega (n)}$ for some function $\omega$ such that $\lim_{n\to\infty}\omega (n)=\infty$ to be determined later. From the proof of \cite[Theorem 1]{l90}, 
\begin{align}
\mathbb{P}(\mathcal{C}_{\omega (n)}^c) \leq \frac{d^{\omega (n)}}{1-d}.
\end{align} 
Let $N$ be the number of cycles with size greater than or equal to $m$. Again using the proof of \cite[Theorem 1]{l90},
\begin{align}
\mathbb{P}(N>\eta (n)) \leq \frac{1}{\eta (n)}\mathbb{E}N \leq \frac{1}{\eta (n)}\frac{d^m}{1-d}\leq \frac{1}{1-d}\frac{1}{\eta (n)}.
\end{align}
Then we may condition on the high probability event $\lbrace N \leq \eta (n)\rbrace$ for a function $\eta$ satisfying $\lim_{n\to\infty}\eta (n)=\infty$. This implies that there are at most $\eta (n)$ cycles with at most $\eta (n) \omega (n)$ edges in them.

Now we estimate the entries of $Z^{k}$, which we denote by $W\odot A$ where the entries of $W$ correspond to the sum of products of weights $Y_{ij}$, and $A$ is a Bernoulli matrix representing the underlying graph structure. Then
\begin{align} \label{weight}
(W)_{ij}=\sum_{P: i\to j}Y_{i\xi_1}Y_{\xi_1 \xi_2}\cdots Y_{\xi_{k-1}j}
\end{align}
where the sum ranges over all paths $i\xi_1 \xi_2 \ldots \xi_{k-1}j$ from $i$ to $j$ with length $k$. We will bound both the number of possible paths and  the product of weights. From now we will denote $k=k(n)$ to emphasize that we will choose $k$ to be a very rapidly increasing function depending on $n$. 

The key idea is that an edge not present in any directed cycle can be traversed at most once by some path $P$. Similarly, a directed cycle can be visited at most once, although our path will typically loop multiple times at the single visit. Then any path $i\to j$ can be characterized by the order of directed cycles and the remaining edges not in cycles, along with the number of times it loops in each cycle. With high probability we may assume that $X$ has at most $2n$ edges. Then a crude bound for this quantity is
\begin{align} \label{number of paths}
(2n+\eta (n))!  k(n)^{\eta (n)} \ll (3n)!k(n)^{\log n}
\end{align}
assuming $\eta (n) \ll \log n$. 

From the inequality
\begin{align}
\mathbb{P}\left( \max_{1\leq i,j\leq n}|Y_{ij}|>n^2\right) & \leq n^2 \mathbb{P}(|Y_{11}|>n^2) \leq n^{-2}\mathbb{E}|Y_{11}|^2 =n^{-2},
\end{align}
we may condition on the high probability event where all entries have absolute value bounded by $n^2$. Now we make a stronger estimate for weights present in cycles, which will compose most of the product in \eqref{weight} for very large $k(n)$. Let $E$ denote the set of edges in the cycles.
Since the number of edges in cycles are bounded by $\eta (n)\omega (n)$, 
\begin{align} \label{strong}
\mathbb{P}\left( \max_{e\in E}|Y_e|>t(n)\right) & =1-\mathbb{P}\left( \max_{e\in E}|Y_e| \leq t(n)\right) =1-\prod_{e\in E}\mathbb{P}(|Y_e|\leq t(n))\nonumber\\
& =1-\left( 1-\mathbb{P}(|Y_{11}|>t(n))\right)^{\eta (n)\omega (n)} \nonumber\\
& \leq 1-(1-t(n)^{-2})^{\eta (n)\omega (n)}\nonumber\\
& \approx 1-e^{-\frac{\eta (n)\omega (n)}{t(n)^2}},
\end{align}
where we have used Markov's inequality in the third line.
By choosing $t(n)^2\gg \eta (n)\omega (n)$, we may condition on the high probability event where the absolute values of weights on edges in cycles are bounded by $t(n)$. For the matrix $A$, we use the rough estimate
\begin{align} \label{frobenius}
\| A\| \leq \| A\|_F \leq n^2
\end{align}
where $\|\cdot \|_F$ denotes the Frobenius norm.

We now estimate $\| Z^{k(n)}\|$. Recall that each edge not in a cycle is traversed at most once, so the contribution from weights on these edges is at most $n^{4n}$. Combining \eqref{number of paths} and \eqref{frobenius}, with high probability, we have
\begin{align}
\| Z^{k(n)}\| & = \| W\odot A\| \leq \max_{1\leq i,j\leq n}|W_{ij}|\| A\| \ll (3n)!k(n)^{\log n}n^{4n}t(n)^{k(n)}n^2 \ll n^{5n}k(n)^{\log n}(t(n))^{k(n)}
\end{align}
and
\begin{align}
\rho (Z) \leq \| Z^{k(n)}\|^{\frac{1}{k(n)}} \ll (n^{5n})^{\frac{1}{k(n)}}k(n)^{\frac{\log n}{k(n)}}t(n) \approx t(n)
\end{align}
for $k(n)= n^{n}$. Note that by choosing appropriate $\eta (n)$ and $\omega (n)$, $t(n)$ can be chosen as an arbitrary function satisfying $\lim_{n\to\infty}t(n)=\infty$, and the proof is complete.

\subsection{Proof of Theorem \ref{constant lower bound}}

By Theorem \ref{d<1 acyclic}, $\mathcal{G}_{\mathsf{d}}(n,d/n)$ for $d<1$ has a directed cycle with probability converging to $1-(1-d)e^{d+d^2/2}$. We condition on the event in which a directed cycle exists. By the reasoning in the previous subsection, with high probability, there exists a directed cycle with size smaller than $a(n)$ for some function $a$ with $\lim_{n\to\infty}a(n)=\infty$ to be determined later. We begin with the following facts:
\begin{enumerate}
\item If $i,j$ belong to the same strongly connected component $C$, then there is at most one path of length $k$ from $i$ to $j$. This path is contained completely in $C$.

\begin{proof}
Any path from $i$ to $j$ contained in $C$ is uniquely determined since $C$ is a directed cycle. Suppose a path from $i$ to $j$ contains a vertex $v\notin C$. Then there exist paths $P_1, P_2$ connecting $i\to v$ and $v\to j$, respectively. Since $C$ is a directed cycle, for any $\ell \in C$, we can choose paths $P_3: \ell \to i$ and $P_4: j \to \ell$. Then there exist paths $P_3\cup P_1: \ell \to v$ and $P_2 \cup P_4: v\to \ell$ so $v\in C$, which is a contradiction.
\end{proof} 
\item Assuming the conditions of (i), since $C$ is a directed cycle, if there exists a path from $i$ to $j$, $j$ is uniquely determined by $i$. 
\end{enumerate}
By choosing a vertex $i$ in our cycle $C$, a path $ij_1j_2\ldots j_k$ of length $k$ starting from $i$ is uniquely determined with $j_{\ell}\in C$ for $1\leq \ell \leq k$. This path is a repetition of $ij_1j_2\ldots j_{a(n)}$, which is $C$ itself, plus a remaining path which is a part of $C$.

For some function $\theta$ such that $\lim_{n\to\infty}\theta (n)=0$, with the understanding that $i=j_0$,
\begin{align}
\mathbb{P}\left( \min_{1\leq \ell \leq a(n)}|Y_{j_{\ell -1}j_{\ell}}| > \theta (n)\right) & =\left( \mathbb{P}(|Y_{ij}|>\theta (n))\right)^{a(n)}.
\end{align}
Now using the condition that $\mathbb{P}(Y_{11}=0)=0$, we may choose $a(n)$ to grow sufficiently small so that the right hand side converges to $1$. Hence we may condition on the high probability event that all edge-weights on $C$ are bounded below in magnitude by $\theta (n)$. This implies
\begin{align} \label{theta}
\| Z^k\| \geq \max_{1\leq i,j\leq n}|Z_{ij}| \geq \prod_{1\leq \ell \leq k}|Y_{j_{\ell -1}j_{\ell}}| \geq (\theta (n))^k.
\end{align} 
We also know from \cite{h24} that
\begin{align}
\mathbb{P}(\| Z\| \leq (1+\delta )\lambda_{\alpha}^{\mathrm{light}}) \approx 1-n^{1-(1+\delta )^2}
\end{align}
for $\alpha >2$ and
\begin{align*}
\lambda_{\alpha}^{\mathrm{light}} : =B_{\alpha}\frac{(\log n)^{\frac{1}{2}}}{(\log \log n)^{\frac{1}{2}-\frac{1}{\alpha}}} \ll (\log n)^{\frac{1}{2}}
\end{align*}
%2^{\frac{1}{\alpha}}\alpha^{-\frac{1}{2}}(\alpha -2)^{\frac{1}{2}-\frac{1}{\alpha}}\frac{(\log n)^{\frac{1}{2}}}{(\log \log n)^{\frac{1}{2}-\frac{1}{\alpha}}}
for some constant $B_{\alpha}$ depending only on $\alpha$.
Similarly, 
\begin{align}
\mathbb{P}(\| Z\| \leq (1+\delta )\lambda_{\alpha}^{\mathrm{heavy}}) \approx 1-n^{1-(1+\delta )^{\alpha}}
\end{align}
for $0<\alpha \leq 2$ and $\lambda_{\alpha}^{\mathrm{heavy}}:=(\log n)^{\frac{1}{\alpha}}$.
Now we establish a very rough estimate: with probability, say at least $1-(\log n)^{-1}$,
\begin{align} \label{power norm estimate}
\frac{\| Z\|^n}{\| Z^n\|} \leq \frac{\left( (\log n)^{1/\min \lbrace 2,\alpha \rbrace}\right)^n}{(\theta (n))^n} \ll (\theta (n))^{-n}(\log n)^{n^2/2} \ll (\log n)^{n^2}
\end{align}
for sufficiently slowly decreasing $\theta$.
\begin{lemma} \label{lower bound limit}
The following limit holds with $k=n^{n^n}$ and with high probability:
\begin{align} \label{product norm bound}
\lim_{n\to\infty}C_n^{-\sigma_n (k)/k}\left( \frac{\| Z\|^n}{\| Z^n\|}\right)^{-\nu_n(k)/k} =1.
\end{align}
\end{lemma}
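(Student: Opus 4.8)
The plan is to take the logarithm of the left-hand side of \eqref{product norm bound} and show it tends to $0$; since both factors carry the common super-exponentially small exponent $k^{\frac{\log(n-1)}{\log n}-1}$, this reduces to a comparison of growth rates. Writing $L_n := C_n^{-\sigma_n(k)/k}\left(\|Z\|^n/\|Z^n\|\right)^{-\nu_n(k)/k}$, we have
\begin{align*}
\log L_n = -\frac{\sigma_n(k)}{k}\log C_n - \frac{\nu_n(k)}{k}\log\left(\frac{\|Z\|^n}{\|Z^n\|}\right),
\end{align*}
so it suffices to prove that both terms on the right vanish as $n\to\infty$.

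The key computation is the common factor $k^{\frac{\log(n-1)}{\log n}}/k = k^{\frac{\log(n-1)}{\log n}-1}$ appearing in both $\sigma_n(k)/k$ and $\nu_n(k)/k$. Using $\frac{\log(n-1)}{\log n}-1 = \frac{\log(1-1/n)}{\log n}$ together with $\log(1-1/n) = -\frac{1}{n}(1+o(1))$, we obtain $\frac{\log(n-1)}{\log n}-1 = -\frac{1}{n\log n}(1+o(1))$. With the choice $k = n^{n^n}$ we have $\log k = n^n\log n$, hence
\begin{align*}
k^{\frac{\log(n-1)}{\log n}-1} = \exp\left((\log k)\left(\frac{\log(n-1)}{\log n}-1\right)\right) = \exp\left(-n^{n-1}(1+o(1))\right).
\end{align*}
This factor decays faster than the reciprocal of any quantity of the form $e^{n^c}$ or $e^{n^2\log\log n}$, which is precisely what will annihilate the two summands.

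It remains to bound the remaining factors, all of which grow at most like $e^{O(n^2\log\log n)}$. The prefactors $\frac{(n-1)^3}{(n-2)^2}$ and $\frac{(n-1)^2}{n-2}$ are both $O(n)$, and $\log C_n = \frac{3n}{2}\log n$. For the second summand I would invoke the high-probability estimate \eqref{power norm estimate}, which yields $\frac{\|Z\|^n}{\|Z^n\|}\ll (\log n)^{n^2}$ and hence $\log\left(\frac{\|Z\|^n}{\|Z^n\|}\right) \le n^2\log\log n$ on that event. Combining these bounds, on the intersection of the high-probability events already conditioned on, each summand is bounded in absolute value by $O(n^3\log\log n)\cdot\exp(-n^{n-1}(1+o(1)))$, which tends to $0$. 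Therefore $\log L_n\to 0$, whence $L_n\to 1$ with high probability.

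I expect no serious obstacle: the only thing to verify is that the super-exponential decay $\exp(-n^{n-1})$ of the shared exponent genuinely dominates the surrounding factors, and this is guaranteed by the extravagant choice $k = n^{n^n}$, whose sole role is to force $\frac{\log k}{n\log n} = n^{n-1}$ to overwhelm every polynomial and every $(\log n)^{n^2}$-type quantity in sight. The one point requiring care --- and the closest thing to a genuine step --- is ensuring that the high-probability bound \eqref{power norm estimate} on $\|Z\|^n/\|Z^n\|$ holds simultaneously with the other conditioning events from the proof of Theorem \ref{constant lower bound}, so that the stated conclusion is valid on a single event of probability tending to $1$.
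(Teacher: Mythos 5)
Your proposal is correct and follows essentially the same route as the paper: both arguments rest on the observation that $k^{\frac{\log(n-1)}{\log n}-1}=\exp(-n^{n-1}(1+o(1)))$ for $k=n^{n^n}$, which crushes the polynomial prefactors, the factor $\log C_n=\frac{3n}{2}\log n$, and the high-probability bound $\|Z\|^n/\|Z^n\|\ll(\log n)^{n^2}$ from \eqref{power norm estimate}. The paper simply asserts the two limits can be "explicitly calculated," whereas you carry out the calculation via logarithms; this is a matter of detail, not of method.
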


\begin{proof}
It is clear that the limit is less than or equal to $1$. We show that it is actually $1$ using the bound \eqref{power norm estimate}. The following limits can be explicitly calculated.
\begin{gather*}
\lim_{n\to\infty}C_n^{-\sigma_n(k)/k} = \lim_{n\to\infty}\left( n^{3n/2}\right)^{-\frac{(n-1)^3}{(n-2)^2}\left( n^{n^n}\right)^{\frac{\log (n-1)}{\log n}-1}}=1,\\
\lim_{n\to\infty}\left( \frac{\| Z\|^n}{\| Z^n\|}\right)^{-\nu_n(k)/k} \geq \lim_{n\to\infty}\left((\log n)^{n^2}\right)^{-\frac{(n-1)^2}{n-2}\left( n^{n^n}\right)^{\frac{\log (n-1)}{\log n}-1}} =1.
\end{gather*}
\end{proof}

Now applying Lemma \ref{lower bound limit} and \eqref{theta} to Theorem \ref{rho bound} yields $\rho (Z) \geq \theta (n)$ with high probability.

\section{Proof of Theorem \ref{polynomial theorem}}

Here we prove Theorem \ref{polynomial theorem}.
The lower bound in this case is an immediate consequence of the sparse circular law, Theorem \ref{sparse circular law}. We will prove the upper bound using Theorem \ref{upper bound}.

Let $X_{ij}\sim \mathrm{Bernoulli}(p_n)$ and $Y_{ij}$ have a Weibull distribution with shape parameter $\alpha >0$ satisfying $\mathbb{E}Y_{ij}=0$ and $\mathbb{E}|Y_{ij}|^2=1$. We consider the i.i.d. matrix $H$ with normalized entries
\begin{align*}
H_{ij}:=\frac{1}{\sqrt{np_n}}X_{ij}Y_{ij}.
\end{align*}
Clearly $\mathbb{E}|H_{ij}|^2 \leq \frac{1}{n}$. For any constant $t>0$,
\begin{align*}
\mathbb{P}\left( \max_{1\leq i,j\leq n}|H_{ij}|  >t \right) & \leq \sum_{1\leq i,j\leq n}\mathbb{P}\left( |H_{ij}|>t\right) =n^2 \mathbb{P}(|Y_{ij}|>t\sqrt{np_n})\mathbb{P}(X_{ij}=1) \lesssim n^2 p_n e^{-(t\sqrt{np_n})^{\alpha}}.
\end{align*}
By our assumption $p_n\gg (\log n)^{\frac{2}{\alpha}}/n$ so $n^2p_n \ll e^{(np_n)^{\frac{\alpha}{2}}}$ and the probability converges to $0$. Thus we may condition on the high probability event
\begin{align*}
\mathcal{A}_q :=\left\lbrace \max_{1\leq i,j\leq n}|H_{ij}|\leq \frac{1}{q}\right\rbrace 
\end{align*}
for some appropriate $q$ depending on $c,\varepsilon$, for instance, $q=\frac{3}{c\log (1+\varepsilon )}$. Now applying Theorem \ref{upper bound} to $H$, we have
\begin{align*}
\lim_{n\to\infty}\mathbb{P}(\rho (H)\geq 1+\varepsilon )=0
\end{align*}
for any $\varepsilon >0$ and the proof is complete.

\end{document}